\theoremstyle{plain}
\newtheorem{theorem}{Theorem}[section]
\newtheorem{proposition}[theorem]{Proposition}   
\newtheorem{lemma}[theorem]{Lemma}   
\newtheorem{corollary}[theorem]{Corollary}   
\theoremstyle{remark}
\newtheorem{remark}[theorem]{Remark}
\newcommand{\dlpbk}{\ar[dl, phantom, "\llcorner", very near start]}
\newcommand{\ddpbk}{\arrow[phantom]{dd}[very near start]{\rotatebox{45}{$\llcorner$}}} 
\providecommand{\kat}[1]{\text{\textbf{\textsl{#1}}}}
\newcommand{\infGrpd}{{\mathcal{S}}}
\newcommand{\simplexcat}{\boldsymbol \Delta}
\newcommand{\LIN}{\kat{LIN}}
\newcommand{\one}{\mathbf{1}}
\renewcommand{\Im}{\operatorname{Im}}
\DeclareMathOperator{\Id}{Id}
\DeclareMathOperator{\id}{id}
\newcommand{\xra}{\xrightarrow}
\newcommand{\xla}{\xleftarrow}
\newcommand{\eq}{\simeq}
\newcommand{\conv}{\ast}
\newcommand{\op}{^{\text{{\rm{op}}}}}
\newcommand{\Phieven}{\Phi_{\operatorname{even}}}
\newcommand{\Phiodd}{\Phi_{\operatorname{odd}}}
\newcommand{\Seven}{S_{\operatorname{even}}}
\newcommand{\Sodd}{S_{\operatorname{odd}}}
\providecommand{\norm}[1]{\left| {#1}\right|}
\newcommand{\upperstar}{^{\raisebox{-0.25ex}[0ex][0ex]{\(\ast\)}}}
\newcommand{\lowershriek}{_!}
\newcommand{\isopil}{\stackrel{\raisebox{0.1ex}[0ex][0ex]{\(\sim\)}}%
			{\raisebox{-0.15ex}[0.28ex]{\(\rightarrow\)}}}
\newcommand{\CC}{\mathcal{C}}
\newcommand{\Q}{\mathbb{Q}}
\def\overarrow#1{{\vec{#1}}}
\def\nondeg{\overarrow}
\newcommand{\ora}{\nondeg}
\renewcommand{\epsilon}{\varepsilon}
\title{Antipodes of monoidal decomposition spaces}
\author{Louis Carlier$^*$ and Joachim Kock\thanks{Supported by grants
MTM2016-80439-P  (AEI/FEDER, UE) of Spain and
  2017-SGR-1725 of Catalonia.}\\[4pt]
  \footnotesize
Departament de Matem\`atiques\\[-4pt]
\footnotesize Universitat Aut\`onoma de Barcelona}
\date{}
\begin{document}
\maketitle
\vspace{-2em}
\begin{center}
  \emph{To the memory of Thomas Poguntke}
\end{center}
\vspace{1em}

\begin{abstract}
  We introduce a notion of antipode for monoidal (complete)
  decomposition spaces, inducing a notion of weak antipode for their
  incidence bialgebras.  In the connected case, this recovers
  the usual notion of antipode in Hopf algebras.  In the non-connected
  case it expresses an inversion principle of more limited scope, but
  still sufficient to compute the M\"obius function as $\mu = \zeta
  \circ S$, just as in Hopf algebras.  At the level of decomposition
  spaces, the weak antipode takes the form of a formal difference of
  linear endofunctors $\Seven - \Sodd$, and it is a refinement
  of the general M\"obius inversion construction of
  G\'{a}lvez--Kock--Tonks, but exploiting the monoidal structure.
\end{abstract}

\section{Introduction}
\label{sec:intro}

Decomposition spaces were introduced by G\'alvez, Kock and
Tonks~\cite{GKT1,GKT2,GKT3} as a very general setting for incidence
algebras and M\"obius inversion, and independently by Dyckerhoff and
Kapranov~\cite{DK} under the name unital $2$-Segal spaces, for use in
homological algebra, representation theory and geometry.  A 
decomposition space is a simplicial $\infty$-groupoid with a property
expressing the ability to decompose objects.

It is the combinatorial perspective that concerns the present
contribution.  The line of development from the classical theory of
incidence coalgebras \cite{Joni-Rota} is summarised by regarding locally
finite posets as special instances of M\"obius 
categories~\cite{Leroux76}, which in
turn are regarded as simplicial sets via the nerve.  The crucial
observation from \cite{GKT1} is that the Segal condition (which
characterises the ability to compose as in a category) is not needed:
the decomposition-space axiom characterises instead the ability to
{\em decompose}, in a sufficiently controlled manner so as to 
allow the
construction of an incidence coalgebra.  There are countless examples in 
combinatorics of coalgebras arising from decomposition spaces but not (directly) 
from categories or posets.  The passage from simplicial sets to simplicial
groupoids is motivated by combinatorics to take into account
symmetries.  The further passage to $\infty$-groupoids is harder to
justify from combinatorics, but is the natural level of generality
from a homotopy viewpoint.  The
decomposition-space approach to incidence algebras is
{\em objective} (in the sense by Lawvere and 
Menni~\cite{Lawvere-Menni}), meaning that the constructions take place with the 
combinatorial objects 
themselves rather than with vector spaces spanned by them. In this
way, all proofs are natively `bijective'.  It is an attractive 
feature of the theory that most arguments boil down to computing pullbacks, 
by which we always mean {\em homotopy} pullbacks (i.e.~pullbacks in 
the $\infty$-category $\infGrpd$ of $\infty$-groupoids).

%
Bialgebras and Hopf algebras, rather than just coalgebras, are
obtained from {\em monoidal} decomposition spaces.  In examples from 
combinatorics, the monoidal structure is often disjoint union. It is 
characteristic for the decomposition-space approach that
the bialgebras obtained are (often filtered but) not connected in general.
In particular they are not in general Hopf.  For example, for the 
nerve of a category, all identity arrows become group-like elements in the 
coalgebra.

While the decomposition-space theory was originally modelled on incidence
coalgebras and M\"obius inversion machinery \`a la Rota, algebraic
combinatorics soon after Rota discovered the more powerful machinery
of antipodes, when available. For example, in an incidence Hopf
algebra, M\"obius inversion amounts to precomposing with the antipode $S$, 
exhibiting in particular the M\"obius function as $\mu = \zeta \circ S$. 
The work of Schmitt \cite{Schmitt:antipodes, Schmitt:IHA} was seminal to 
the change of emphasis from M\"obius inversion to antipodes. The recent
work of Aguiar and Ardila~\cite{Aguiar-Ardila} represents a
striking example of the power of antipodes.

%
%

The present note upgrades the G\'alvez--Kock--Tonks M\"obius-inversion
construction~\cite{GKT2} to the construction of a kind of antipode in any 
{\em monoidal} (complete) decomposition space.
Many of the constructions are quite similar; the main innovative idea is that 
there is a useful weaker notion of antipode for
bialgebras even if they are {\em not} Hopf.

We introduce this notion and  establish its main features (and 
limitations).  Briefly, for $X$ a monoidal (complete) decomposition 
space, the antipode is defined as a formal 
difference between linear endofunctors  of $\infGrpd_{/X_1}$,
$$
S := \Seven-\Sodd ,
$$
given by multiplying principal edges of nondegenerate simplices 
(cf.~p.\pageref{S} below).
It cannot quite convolution-invert the identity endofunctor, 
as a true antipode should  \cite{Sweedler},
but it can invert a modification of it, denoted $\Id'$:
$$
\Id'(f) = \begin{cases}  f & \text{ if } f \text{ nondegenerate} ,\\
\id_u & \text{ if } f \text{ degenerate}.
\end{cases}
$$
Here $u$ is the monoidal unit object, and we write $\id_u $ for $s_0 
u$.

Precisely, our main theorem (\ref{thm:antipodeformula}) 
is the inversion formula
$$\boxed{
\Seven \conv \Id' \ \simeq \ e + \Sodd \conv \Id' }
$$
where $e:=\eta \circ \epsilon$ is the neutral element for convolution.
Under the finiteness conditions satisfied by {\em M\"obius} 
decomposition spaces~\cite[\S8]{GKT2}, one can take homotopy 
cardinality~\cite[\S3]{GKT:HLA} and form the difference $\norm{S} := 
\norm\Seven-\norm\Sodd$ to arrive at the nicer-looking equation in the 
$\Q$-vector-space level convolution algebra:
$$
\norm S \conv \norm{\Id'} \ = \ \norm e \ = \ \norm{\Id'} \conv \norm S.
$$

The three main features justifying the weaker notion of antipode are:
\begin{enumerate}
    \item If the monoidal decomposition space is connected, so that
	its incidence bialgebra is Hopf, 
	then the homotopy cardinality of $S$ 
	is the usual antipode (cf.~Proposition~\ref{prop:classical-S}).
	(At the objective level of decomposition spaces, the construction 
	of $S$ is new also in the connected case.)
	
    \item In any case, $S$ computes the M\"obius functor as
	\[
	\mu \simeq \zeta \circ S  
	\]
	(cf.~Corollary~\ref{cor:classical-mu}).
	
	\item More generally, we establish an inversion formula for
	multiplicative functors (valued in any algebra) that send
	group-like elements to the unit (Theorem~\ref{thm:inversion}).  The
	zeta functor is an example of this.
\end{enumerate}

At the algebraic level of $\Q$-vector spaces, the weak antipode can be
seen as a lift of the true antipode from the connected quotient of the
bialgebra.
When the bialgebra comes from the nerve of a category,
this quotient is obtained by identifying all objects of the category.
Recent developments have shown the utility of avoiding this reduction,
which destroys useful information.  For example, the Fa\`a di Bruno
formula for general operads~\cite{GKT:faa}, \cite{KW} crucially
exploits the finer structure of the zeroth graded piece of the
incidence bialgebra, and in the bialgebra
version~\cite{Kock:1411.3098} of BPHZ renormalisation in perturbative
quantum field theory, the zeroth graded piece of the bialgebra of
Feynman graphs contains the terms of the Lagrangian (not visible in 
the quotient Hopf algebra usually employed).

\section{Preliminaries: monoidal decomposition spaces and their 
incidence bialgebras}

We assume familiarity with the basic theory of decomposition 
spaces \cite{GKT1, GKT2, GKT3}, and limit ourselves to a minimal 
background section, so as at least to establish notation.

Like in \cite{GKT1, GKT2, GKT3}, we work with
simplicial $\infty$-groupoids, since it is the natural generality of
the theory.  However, our results belong to combinatorics, where the
examples of interest are merely simplicial groupoids or even
simplicial sets (such as the nerve of a poset).  The reader may safely
substitute `groupoid' or `set' for the word `$\infty$-groupoid'
throughout.  This is the viewpoint taken in \cite{GKT:combinatorics},
which may serve as an introduction.

\paragraph{Linear functors and spans.}
Denote by $\infGrpd$ the 
$\infty$-category of $\infty$-groupoids.  
A functor between slices $F: \infGrpd_{/I} \to \infGrpd_{/J}$
is called {\em linear}~\cite[\S 2]{GKT:HLA} if it is given by a span
$$
I \stackrel{p}\longleftarrow M \stackrel{q}\longrightarrow J
$$
by pullback along $p$ (denoted $p^*$) followed by composition with 
$q$ (denoted $q_!$), i.e. $F = q_! \circ p^*$.  The $\infty$-groupoid $M$ 
itself plays the role of an $(I\times J)$-indexed matrix. Crucially, 
composition of linear functors is given by taking pullback like this:
\[
\begin{tikzcd}[sep={1cm,between origins}]
  && \cdot \ar[ld] \ar[rd] \ddpbk && \\
  & M \ar[ld] \ar[rd] && N \ar[ld] \ar[rd] & \\
  I && J && K,
\end{tikzcd}
\]
the objective version of matrix multiplication.
We denote by $\LIN$ the $\infty$-category of slices of 
$\infGrpd$ and linear functors.
In suitably finite situations, one can take homotopy cardinality of
slices and linear functors to obtain vector spaces and linear maps 
(see for examples the Appendix of \cite{GKT:combinatorics}).

\paragraph{Decomposition spaces.}
A simplicial $\infty$-groupoid $X: \simplexcat\op\to\infGrpd$ is
called a {\em decomposition space} if it takes active-inert pushouts
in $\simplexcat$ to pullbacks~\cite[\S3]{GKT1}.
The precise meaning is not so important for the present purposes---it 
suffices here to say that the decomposition-space axiom precisely
ensures that the comultiplication given by the linear functor
\[
\Delta: \infGrpd_{/X_1} \ \xrightarrow{(d_2,d_0)\lowershriek \circ 
d_1\upperstar} \ \infGrpd_{/X_1\times X_1}
\]
defined by the span
\[
X_1 \stackrel{d_1}\longleftarrow
X_2 \xrightarrow{(d_2,d_0)} X_1 \times X_1
\]
is up-to-coherent-homotopy coassociative as well as counital
(the counit being defined by the span $X_1 \stackrel{s_0}\leftarrow 
X_0 \to \one$) \cite[\S5]{GKT1}. Here and throughout, $\one$ 
denotes any contractible $\infty$-groupoid (the terminal 
$\infty$-groupoid), so that $\infGrpd_{/\one}\simeq \infGrpd$. The 
$\infty$-category $\infGrpd_{/X_1}$,
with $\Delta$ and $\epsilon$,
is called the {\em incidence coalgebra} of $X$.
When $X$ is locally finite~\cite[\S7]{GKT2}, one can take homotopy 
cardinality to obtain an ordinary coalgebra in $\Q$-vector 
spaces, namely $\Q_{\pi_0 X_1}$, the $\Q$-vector space spanned by path 
components of $X_1$.
Any Segal space is a decomposition space~\cite[\S3]{GKT1};
an example to keep in mind is the nerve of a small
category.


\paragraph{Complete decomposition spaces, and nondegenerate simplices.}
A decomposition space is {\em complete} (\cite[\S2]{GKT2}) when $s_0:
X_0 \to X_1$ is a monomorphism of $\infty$-groupoids (i.e.~its 
(homotopy) fibres
are either empty or contractible). (For simplicial {\em sets}, the condition 
is automatic.) This condition ensures that
there is a well-behaved notion of nondegenerate
simplices: define the space of nondegenerate $1$-simplices $\nondeg
X_1$ as the complement of the essential image of the
monomorphism $s_0: X_0 \to X_1$, so that we have
$$
X_1 \simeq X_0 + \nondeg X_1,
$$
where $+$ denotes the disjoint union.
More generally, $\nondeg X_n \subset X_n$ is characterised as the
complement of the union of the essential images of the degeneracy maps 
$s_i: X_{n-1} \to X_n$, that is
$$
\nondeg X_n = X_n \setminus \bigcup_{i=0}^{n-1} \Im(s_i) .
$$
By definition $\nondeg X_0 = X_0$.
In a complete decomposition space, an $n$-simplex is nondegenerate if and 
only if all its $n$ principal edges are nondegenerate~\cite[\S2]{GKT2}.

\paragraph{Monoidal decomposition spaces.}

 Bialgebras are obtained from decomposition spaces with a 
{\em CULF monoidal structure}~\cite[\S9]{GKT1}.  This means first of all that 
there are simplicial maps (unit and multiplication):
$$
\one \stackrel{\eta}\longrightarrow X 
\stackrel{\mu}\longleftarrow X \times X ,
$$
but with the important condition imposed that these maps should be CULF,
which is a pullback condition required expressly to ensure
that there is induced
%
%
a monoid structure on the 
incidence coalgebra $\infGrpd_{/X_1}$ in the $\infty$-category of 
coalgebras,
and hence altogether a
bialgebra structure on $\infGrpd_{/X_1}$.

\paragraph{Connectedness.}

A monoidal decomposition space (or its incidence bialgebra) is called 
{\em connected} when $X_0$ is 
contractible (that is, $X_0 \simeq \one$). Usually, connectedness should 
refer to a filtration~\cite{Sweedler}. This filtration does not always exist 
for a monoidal decomposition space $X$, but it does
exist when $X$ is M\"obius (the existence of the length 
filtration is one characterisation of the M\"obius 
condition~\cite[\S8]{GKT2}).
In that case, $X_0$ spans filtration degree $0$,
so the condition $X_0 \simeq \one$ agrees with the usual notion of being
connected for filtered coalgebras (or bialgebras).
When $X$ is M\"obius and connected, its cardinality is a
connected filtered bialgebra, and therefore, by standard 
arguments~\cite{Sweedler}, a Hopf algebra.
However, many important incidence bialgebras are not connected.

\paragraph{Examples.}
  Let $X$ be the fat nerve of the category of finite sets and
  surjections.  The resulting bialgebra is the F\`aa di Bruno
  bialgebra~\cite{GKT:combinatorics}.  The zeroth graded piece 
  is spanned by the invertible 
  surjections (which are all group-like), so is not connected.  The monoidal 
  structure is disjoint union and the monoidal unit is the (identity of the) 
  empty set.
  

  More generally, for any reduced operad, the so-called
  two-sided bar construction $X$ is
  a monoidal (complete) decomposition space~\cite{KW}.
  The
  groupoid $X_0$ is the free symmetric monoidal category on the set of
  objects of the operad. 
  (Note that $X$ is never connected.) 
  The groupoid $X_1$ is
  the free symmetric monoidal category on the action groupoid of the 
  symmetric-group actions on the set of
  operations. 
  The generalisation of the classical Fa\`a di Bruno formula to any
  operad \cite{GKT:faa,KW} (the classical case being that of the terminal reduced 
  operad) crucially exploits the typing constraints expressed by the 
  objects in $X_0$ (which are invisible in the connected quotient 
  Hopf algebra).

\section{Antipodes for monoidal complete decomposition spaces}

\paragraph{Convolution.}
Let $X$ be a monoidal decomposition space.  For $F, G :
\infGrpd_{/X_1} \to \infGrpd_{/X_1}$ two linear endofunctors, the
{\em convolution product} $F \conv G : \infGrpd_{/X_1} \to
\infGrpd_{/X_1}$ is given by first comultiplying, then composing with 
the tensor product $F \otimes G$, and finally multiplying.
If $F$ and $G$ are given by the 
spans $X_1 \xla{} M \xra{} X_1$ and $X_1 \xla{} N \xra{} X_1$, then $F \conv G$ is given by the composite of spans
\begin{center}
  \begin{tikzcd}[row sep=large]
    X_1 & & & \\
        X_2  \ar[u, ""]
           \ar[d, ""'] 
             & M \conv N \dlpbk
                         \ar[l, ""] 
                         \ar[d, ""] 
                         \ar[ul, ""] 
                         \ar[drr, ""]& \\
        X_1 \times X_1  & M \times N \ar[l] \ar[r] & X_1 \times X_1 \ar[r, "\mu"'] & X_1.
  \end{tikzcd}
\end{center}

The neutral element for convolution in 
$\LIN(\infGrpd_{/X_1}, \infGrpd_{/X_1})$ is $e := \eta \circ \epsilon$. By 
composition of spans, it is given by the span
\[
    X_1 
	\stackrel{s_0}\longleftarrow
	X_0 
	\stackrel{w}\longrightarrow
	X_1 ,
\]
where $w$ denotes the composite $X_0 \xra{} \one \xra{\eta} X_1$.

\paragraph{The antipode.}

Define the linear endofunctor $S_n : \infGrpd_{/X_1} \to \infGrpd_{/X_1}$ 
by the span
\begin{equation}\label{Sn}
    X_1 \stackrel g \longleftarrow 
	\ora{X}_n \xra{p} X_1 \times \ldots \times X_1 \xra{\mu_n} X_1  ,
\end{equation}
where $g$ 
returns the `long edge' of a 
simplex, and $p$ returns its $n$ principal edges.

In the case $n=0$, we have $g=s_0$ and $(X_1)^0 = \one$ and $\mu_0 = \eta$, 
whence $S_0$ coincides with the neutral element:
$$
S_0 = e .
$$
Note also that the functor $S_1$ is given by the span 
\begin{equation}\label{S1}
    X_1 \stackrel i \longleftarrow 
	\ora{X}_1 \stackrel i \longrightarrow X_1.
\end{equation}

\begin{lemma}\label{antipoderecformula}
  We have
  $$
  S_n \simeq (S_1)^{*n} .
  $$
\end{lemma}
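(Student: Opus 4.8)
The plan is to unfold $(S_1)^{\conv n}$ into a single iterated--pullback computation and to recognise the apex of the resulting span as $\ora X_n$, so that the span becomes exactly the one defining $S_n$ in \eqref{Sn}. Two structural inputs carry the argument: that iterating the comultiplication of a decomposition space is computed by $X_n$, and that completeness characterises $\ora X_n$ as the space of $n$-simplices all of whose principal edges are nondegenerate. The cases $n=0$, where $(S_1)^{\conv 0}=e=S_0$, and $n=1$, where the two spans coincide on the nose by \eqref{S1}, serve as trivial base cases.

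First I would record that, by associativity of convolution --- a formal consequence of coassociativity of $\Delta$ together with associativity of the multiplication on $\infGrpd_{/X_1}$ --- the power $(S_1)^{\conv n}$ is the composite linear functor $(\mu_n)_! \circ (S_1)^{\otimes n} \circ \Delta_{(n)}$, where $\Delta_{(n)} : \infGrpd_{/X_1} \to \infGrpd_{/X_1^n}$ is the $(n{-}1)$-fold iterated comultiplication and $(\mu_n)_!$ is postcomposition with $\mu_n : X_1^n \to X_1$. From the basic theory of decomposition spaces \cite{GKT1}, $\Delta_{(n)}$ is given by the span $X_1 \leftarrow X_n \to X_1^n$ whose left leg is the long edge and whose right leg is the spine, that is, the $n$ principal edges; this is the iteration of the pullback identifications $X_n \simeq X_2 \times_{X_1} \cdots \times_{X_1} X_2$ supplied by the decomposition-space axiom, the same identifications underlying the coassociativity recalled in the preliminaries.

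Next, since $S_1$ is the span $X_1 \xleftarrow{i} \ora X_1 \xrightarrow{i} X_1$ with $i$ a monomorphism, the external power $(S_1)^{\otimes n}$ is the span $X_1^n \xleftarrow{i^{\times n}} (\ora X_1)^{\times n} \xrightarrow{i^{\times n}} X_1^n$, again with monic legs. Composing spans, the apex of $(S_1)^{\otimes n} \circ \Delta_{(n)}$ is the pullback of the spine map $X_n \to X_1^n$ along the monomorphism $(\ora X_1)^{\times n} \into X_1^n$; being a pullback along a monomorphism, it is the full subgroupoid of $X_n$ spanned by those $n$-simplices whose $n$ principal edges are all nondegenerate, which by completeness is precisely $\ora X_n$. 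Hence $(S_1)^{\otimes n} \circ \Delta_{(n)}$ is the span $X_1 \xleftarrow{g} \ora X_n \xrightarrow{p} X_1^n$, and postcomposing with $(\mu_n)_!$ --- appending $\mu_n$ to the right leg --- returns exactly the span \eqref{Sn}.

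I expect the only real friction to be bookkeeping rather than mathematics: verifying that span composition genuinely produces the claimed pullback over $X_1^n$, with no stray homotopy pullbacks, and that the two legs emerge as the long-edge map $g$ on the left and $\mu_n \circ p$ on the right, the latter using associativity of $\mu$ to fold the $n$ principal edges into $\mu_n$. Equivalently, one may argue by induction on $n$, establishing $S_n \simeq S_{n-1} \conv S_1$ from the single decomposition-space identification $X_n \simeq X_{n-1} \times_{X_1} X_2$ together with completeness; this invokes slightly less at each step but proceeds along the same lines, with the multiplication $\mu_{n-1}$ on the output leg of $S_{n-1}$ combining with the last principal edge by associativity of $\mu$.
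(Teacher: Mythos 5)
Your argument is correct, but it is organised differently from the paper's. The paper proceeds by a single convolution step: it computes $S_1 \conv S_n$ via one pullback square imported from Lemma~3.5 of \cite{GKT2}, namely $\ora X_{1+n}\simeq X_2\times_{X_1\times X_1}(\ora X_1\times\ora X_n)$, and concludes by (implicit) induction on $n$. You instead unfold the whole $n$-fold convolution at once: you use that the iterated comultiplication $\Delta_{(n)}$ is the span $X_1\xleftarrow{g}X_n\xrightarrow{p}X_1^{\,n}$ (long edge and spine), so that the apex of $(S_1)^{\conv n}$ is the pullback of the spine map along the monomorphism $(\ora X_1)^{\times n}\into X_1^{\,n}$, which by the completeness characterisation of nondegeneracy via principal edges is exactly $\ora X_n$. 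The essential input is the same in both proofs---completeness is what identifies the relevant pullback with a space of nondegenerate simplices---but your global version replaces the relative pullback lemma of \cite{GKT2} by the standard span description of $\Delta_{(n)}$ together with a single pullback along a monomorphism, at the (mild) cost of first invoking coassociativity of $\Delta$ and associativity of $\mu$ to justify $(S_1)^{\conv n}\simeq(\mu_n)_!\circ(S_1)^{\otimes n}\circ\Delta_{(n)}$. What each approach buys: yours makes the identification of the apex with $\ora X_n$ conceptually transparent in one step; the paper's inductive step is more economical in its prerequisites and directly exhibits the face-map descriptions of the two legs. Your closing alternative---inducting via $S_{n-1}\conv S_1$ and the decomposition-space pullback relating $X_n$ to $X_{n-1}$ and $X_2$---is, up to the order of the convolution factors, precisely the paper's proof.
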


\begin{proof}
The case $n=0$ is trivial since $S_0$ is neutral.
In the convolution 
$S_n \conv S_1$,
the main pullback is given by the Lemma 3.5 of \cite{GKT2}:
\begin{center}
      \begin{tikzcd}[sep = large]
          X_1  & & &\\
          X_2 \ar[u, "d_1"] \ar[d, "{(d_2,d_0)}"'] & \ora{X}_{n+1} 
		  \ar[d, "{(d_\top, d_\bot{}^n)}"] \ar[l, "d_1\circ \cdots \circ d_{n-1}"']
		  \ar[ul, "g"'] \ar[drr, "\mu \circ p"] \dlpbk & &\\
          X_1 \times X_1  & \ora{X}_n \times \ora{X}_1 \ar[l, "g 
		  \times i"]  \ar[r, "p \times \id"'] & (\nondeg X_1)^{n} \times \nondeg X_1 \ar[r, "\mu"'] & X_1.
      \end{tikzcd}  
  \end{center}
  Commutativity of the upper triangle is precisely the face-map description of $g$.  
  The lower triangle commutes since $d_\bot{}^n$ returns itself the 
  last principal edge.
\end{proof}

Put
$$
\Seven := \sum_{n \text{ even}} S_n , \qquad
\Sodd := \sum_{n \text{ odd}} S_n .
$$
Note that the sum of linear functors is given by the 
sum (disjoint union) of the middle objects of the respesenting spans.
Hence $\Seven$ is given by the span
$$
X_1 \longleftarrow \sum_{n \text{ even}} \ora X_n 
\longrightarrow  X_1 ,
$$
where the left leg returns the long edge of a simplex, and 
the right leg returns the monoidal product of the principal edges.  
Similarly of course with $\Sodd$.

The {\em antipode} $S$ is defined as the formal difference
\label{S}
$$
S := \Seven-\Sodd .
$$
The difference cannot be formed at the objective level where 
there is no minus sign available, but it does make sense after 
taking homotopy cardinality to arrive at $\Q$-vector spaces.
For this to be meaningful, certain finiteness conditions must be 
imposed: $X$ should be {\em M\"obius}, which means locally finite and
of locally finite length, cf.~\cite[\S8]{GKT2}.
We shall continue to work with
$\Seven$ and $\Sodd$ individually.  

The idea of an antipode is that it should be convolution inverse to
the identity functor, 
i.e.~$S \conv \Id$ 
should be $\eta\circ\epsilon$.
This is not in general true for monoidal decomposition spaces.  We
show instead that $S$ inverts
the following modified identity functor.

The linear functor $\Id' : \infGrpd_{/X_1} \to \infGrpd_{/X_1}$ is 
given by the span
\[
    X_1 \stackrel{=}\longleftarrow 
	X_0 + \ora{X}_1 \stackrel{w|i}\longrightarrow
	X_1,
\]
where $i$ is the inclusion $\ora{X}_1 \subset X_1$, and 
$w: X_0 \xra{p} \one \xra{\eta} X_1$ is the constant map with value
$\id_u$, the identity at the monoidal unit object $u$. 
In other words,
$$
\Id' \eq S_0 + S_1.
$$
On elements,
\[\Id'(f) =         
    \begin{cases}
        f     &\text{ if $f$ nondegenerate},\\
      \id_{u }, &\text{ if $f$ degenerate}.
    \end{cases}
\]

\begin{lemma}\label{lem:id'*S_n}
The linear functors $S_n$ satisfy
    \[
    S_n \conv \Id' \ \eq \ S_n + S_{n+1} \ \eq \ \Id' \conv S_n.
    \]
\end{lemma}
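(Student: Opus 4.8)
The plan is to compute the convolution $\Id' \conv S_n$ directly as a composite of spans, using the decomposition $\Id' \simeq S_0 + S_1$ established just above. Since convolution is bilinear (it preserves sums in each variable, being built from pullback and pushforward), we get $\Id' \conv S_n \simeq (S_0 \conv S_n) + (S_1 \conv S_n)$. The first term is immediate: $S_0 = e$ is the neutral element for convolution, so $S_0 \conv S_n \simeq S_n$. The second term is exactly the content of Lemma~\ref{antipoderecformula} and its proof: the pullback computation there shows $S_1 \conv S_n \simeq S_{n+1}$ (indeed $S_{n+1} \simeq (S_1)^{\conv(n+1)} \simeq S_1 \conv (S_1)^{\conv n} \simeq S_1 \conv S_n$). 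Adding these gives $\Id' \conv S_n \simeq S_n + S_{n+1}$.

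For the other equality, $S_n \conv \Id' \simeq S_n + S_{n+1}$, I would argue symmetrically: $S_n \conv \Id' \simeq (S_n \conv S_0) + (S_n \conv S_1) \simeq S_n + (S_n \conv S_1)$, and one needs $S_n \conv S_1 \simeq S_{n+1}$. This is the mirror image of the pullback in the proof of Lemma~\ref{antipoderecformula}: instead of gluing a nondegenerate edge on the left of an $n$-simplex, one glues it on the right, and the relevant square is again an instance of \cite[Lemma~3.5]{GKT2} (active-inert decomposition of $X_{n+1}$, this time writing $[n+1]$ as the active gluing of $[n]$ and $[1]$ along the last vertex rather than the first). Concretely, the apex of the main pullback is $\ora{X}_{n+1}$, the long-edge map $g$ and the principal-edge map $p$ factor appropriately, and the commutativity of the two triangles is again just the face-map description of $g$ together with the fact that the bottom face returns the last principal edge. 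So both convolutions land on $S_n + S_{n+1}$.

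I do not expect a genuine obstacle here; the lemma is essentially a repackaging of Lemma~\ref{antipoderecformula} together with bilinearity of convolution and the neutrality of $S_0$. The only point requiring a little care is making sure the nondegeneracy bookkeeping is consistent: the span for $S_1$ has apex $\ora{X}_1$, so the pullbacks genuinely involve the nondegenerate part, and one should note (as in \cite[\S2]{GKT2}) that gluing a nondegenerate $1$-simplex onto any simplex along an outer vertex produces a simplex whose full set of principal edges is nondegenerate precisely when the original one was --- which is what makes the apex come out as $\ora{X}_{n+1}$ rather than all of $X_{n+1}$. Beyond that, everything is a routine span calculation, so I would keep the proof short, citing Lemma~\ref{antipoderecformula} for the left-handed case and indicating that the right-handed case is the evident mirror.
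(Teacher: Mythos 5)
Your proposal is correct and follows the paper's own argument exactly: decompose $\Id' \eq S_0 + S_1$, use bilinearity of convolution, neutrality of $S_0$, and Lemma~\ref{antipoderecformula} to get $S_1 \conv S_n \eq S_{n+1} \eq S_n \conv S_1$ (the latter following just as well from $S_n \eq (S_1)^{*n}$ and associativity as from your explicit mirror-image pullback). The extra detail you supply about the right-handed pullback and the nondegeneracy bookkeeping is sound but not needed beyond what the cited lemma already provides.
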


\begin{proof}
Since $\Id' \eq S_0 + S_1$, the result follows from 
$S_n \conv S_1 \eq S_{n+1} \eq S_1 \conv S_n$
(which is a consequence of Lemma~\ref{antipoderecformula}), and 
$S_n \conv S_0 \eq S_n \eq S_0 \conv S_n$
($S_0$ is neutral for convolution).
\end{proof}

\begin{theorem}\label{thm:antipodeformula}
Given a monoidal complete decomposition space $X$, we have
explicit equivalences
\[
    \Seven \conv \Id' \ \eq \ e + \Sodd \conv \Id' \qquad 
    \text{ and } \qquad  \Id' \conv \Seven \ \eq \ e + \Id' \conv \Sodd  .
\]
\end{theorem}

\begin{proof}
  It follows from Lemma~\ref{lem:id'*S_n} that all four
  functors are equivalent to $\sum_{n \ge 0} S_n$.
\end{proof}

\paragraph{Finiteness conditions and homotopy cardinality.}

If the  monoidal complete decomposition space $X$ is locally finite 
(meaning that $X_1$ is locally finite and $X_0 
\stackrel{s_0}\to X_1 \stackrel{d_1}\leftarrow X_2$ are finite 
maps~\cite[\S 8]{GKT2}), then we can take homotopy 
cardinality~\cite[\S3]{GKT:HLA} to obtain the 
incidence bialgebra at the $\Q$-vector space level, and obtain also
linear endomorphisms
\[
    \norm{S_n} : \Q_{\pi_0 X_1} \to \Q_{\pi_0 X_1}.
    \]
If $X$ is furthermore M\"obius, the sums involved in the definitions
of $\Seven$ and $\Sodd$ are finite, and the difference
$\norm{S} = \norm{\Seven} - \norm{\Sodd}$ is a well-defined linear
endomorphism of $\Q_{\pi_0 X_1}$, and we arrive at the following
weak antipode formula:


\begin{proposition}
If $X$ is a M\"obius monoidal decomposition space, then we have
\[
    \norm{S} \conv \norm{\Id'} = \norm{e} = \norm{\Id'} \conv \norm{S}
\] 
in $\Q_{\pi_0 X_1}$, the homotopy cardinality of the incidence 
bialgebra of $X$.
\end{proposition}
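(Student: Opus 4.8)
The plan is to derive the Proposition by applying homotopy cardinality to Theorem~\ref{thm:antipodeformula}. First I would recall the standing finiteness setup: local finiteness of $X$ guarantees (as in \cite[\S7]{GKT2}) that $\Q_{\pi_0 X_1}$ is defined and that homotopy cardinality turns each finite linear endofunctor into a genuine $\Q$-linear endomorphism, in a way that is compatible with composition of spans (objective matrix multiplication) and hence with the convolution product $\conv$, and that sends the neutral element $e$ to $\norm e$. In particular, since by Lemma~\ref{antipoderecformula} each $S_n$ is built by span composition from the monomorphism $i\colon \ora X_1 \subset X_1$ (and, for $n=0$, from $s_0$), every $\norm{S_n}$ is well defined.

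Second, I would invoke the Möbius hypothesis, i.e.\ local finiteness of length \cite[\S8]{GKT2}: for each $a \in \pi_0 X_1$ there is an $N$ beyond which no nondegenerate simplex of dimension $n>N$ has long edge mapping to $a$. Equivalently, for fixed source and target only finitely many of the $S_n$ contribute a nonzero matrix entry, so the formal sums defining $\Seven$ and $\Sodd$ become locally finite at the cardinality level. Hence $\norm{\Seven} = \sum_{n \text{ even}} \norm{S_n}$ and $\norm{\Sodd} = \sum_{n \text{ odd}} \norm{S_n}$ are honest linear endomorphisms of $\Q_{\pi_0 X_1}$, and the difference $\norm S := \norm{\Seven} - \norm{\Sodd}$ makes sense.

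Third, applying $\norm{-}$ to the two equivalences of Theorem~\ref{thm:antipodeformula} and using the above compatibilities yields, in $\Q_{\pi_0 X_1}$,
\[
\norm{\Id'}\conv\norm{\Seven} = \norm e + \norm{\Id'}\conv\norm{\Sodd}, \qquad \norm{\Seven}\conv\norm{\Id'} = \norm e + \norm{\Sodd}\conv\norm{\Id'}.
\]
Subtracting $\norm{\Id'}\conv\norm{\Sodd}$ (resp.\ $\norm{\Sodd}\conv\norm{\Id'}$) and using bilinearity of $\conv$ over $\Q$ gives $\norm{\Id'}\conv\big(\norm{\Seven}-\norm{\Sodd}\big) = \norm e = \big(\norm{\Seven}-\norm{\Sodd}\big)\conv\norm{\Id'}$, which is exactly $\norm{\Id'}\conv\norm S = \norm e = \norm S \conv \norm{\Id'}$.

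The \emph{main obstacle} is the second step: making precise that the Möbius condition is exactly what is needed for the purely formal differences $\Seven - \Sodd$ to become genuine operators after cardinality, and that cardinality commutes with the resulting locally finite sums. This is the general Möbius-inversion mechanism of \cite[\S8]{GKT2}, here applied to the telescoping pair $(\Seven,\Sodd)$ in place of $(\Phieven,\Phiodd)$; once that bookkeeping is set up, the remainder is routine linear algebra in the convolution algebra.
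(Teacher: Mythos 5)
Your proposal is correct and follows exactly the route the paper takes: the paper gives no separate proof of this Proposition, relying instead on the preceding paragraph, which makes the same points you do --- local finiteness gives the cardinality functor, the M\"obius condition makes the sums defining $\norm{\Seven}$ and $\norm{\Sodd}$ finite so that $\norm{S}$ is well defined, and then one takes cardinality of Theorem~\ref{thm:antipodeformula} and subtracts. Your write-up is in fact somewhat more explicit about the bookkeeping than the paper itself.
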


\paragraph{Connectedness and the usual notion of antipode.}

We say a monoidal decomposition space is \emph{connected} if $X_0$ is
contractible.  In this situation, $X_0$ contains only the monoidal unit, 
so that the maps $w$ and $s_0$ coincide, and hence
$\Id' \simeq \Id$.  (Indeed, note that the identity endofunctor 
$\Id: \infGrpd_{/X_1} \to \infGrpd_{/X_1}$ is given by 
the span $X_1 \xla{=} X_1 \xra{=} X_1$, and that $s_0|i : X_0 + 
\nondeg X_1 \to X_1$ is an equivalence.)
We then get the following stricter inversion result, yielding the
usual notion of antipode in Hopf algebras, after 
taking homotopy cardinality:

\begin{proposition}\label{prop:classical-S}
  If $X$ is a connected monoidal complete decomposition space, then 
    \[
     \Seven \conv \Id \ \eq \ e + \Sodd \conv \Id 
		\qquad \text { and } \qquad
         \Id \conv \Seven  \ \eq \ e + \Id \conv \Sodd . 
	 \]

If moreover $X$ is M\"obius, we get
    \[
    \norm{S} \conv \norm{\Id} = \norm{e} = \norm{\Id} \conv \norm{S}.
\] 
%
\end{proposition}


\paragraph{Relationship with classical antipode formulae.}
If $X$ is the nerve of a M\"obius category $\CC$, then the comultiplication 
formula reads
$$
\Delta(f) = \sum_{b\circ a = f} a \otimes b .
$$
The decomposition space $X$ becomes monoidal if $\CC$ is
{\em monoidal extensive} \cite[\S9]{GKT1}, meaning 
that it has a monoidal structure $(\CC,\otimes, k)$
with natural equivalences
$$
\CC/x \times \CC/y \isopil \CC/(x\otimes y), \qquad \qquad \one \isopil 
\CC/k .
$$
In combinatorics, extensive monoidal structures most often arise as 
disjoint union.

Spelling out the the general antipode formula in the case of a monoidal extensive category gives
$$
S(f) = \sum_{k\geq 0} (-1)^k 
\underset{a_i \neq \id}{\sum_{a_k \circ \cdots \circ a_1 = f}}
a_1 \cdots a_k  .
$$
When $\CC$ is just a locally finite hereditary poset (with 
intervals regarded as arrows), this is Schmitt's antipode
formula for the reduced incidence Hopf
algebra of the poset~\cite{Schmitt:antipodes}.

Schmitt's formula works more generally for hereditary families of
poset intervals, meaning classes of poset intervals that are closed
under taking subintervals and cartesian products~\cite{Schmitt:IHA}.
Our general formula
covers that case as well.  The intervals of such a family do not
necessarily come from a single poset (or even a M\"obius category).
One can prove that such a family always forms a monoidal 
decomposition space,  the most important case being 
the family of {\em all} (finite) poset intervals~\cite{GKT3}.

Other classical antipode formulae are readily extracted.
For example, from the general formula $S_{n+1} \simeq S_n \conv S_1$ (see 
Lemma~\ref{antipoderecformula}), one finds
$$
\Seven \simeq S_0 + \Sodd \conv S_1,  \qquad  \Sodd \simeq \Seven 
\conv S_1 ,
$$
whence the recursive formula
$$
S \simeq S_0 - S \conv S_1 ,
$$
valid after taking homotopy cardinality.
Spelling this out in the case of the nerve of a monoidal extensive 
M\"obius category yields the familiar formula
$$
S(f) = S_0(f) - \underset{b\neq \id}{\sum_{b\circ a = f}}
S(a) \cdot b   ,
$$
which also goes back to Schmitt~\cite{Schmitt:antipodes}, in the poset 
case.

\section{Inversion in convolution algebras}

\paragraph{M\"obius inversion.}

The M\"obius inversion formula \cite[\S3]{GKT2} is 
recovered easily from Theorem~\ref{thm:antipodeformula}.
Recall that the {\em zeta functor} is the linear functor
$\zeta: \infGrpd_{/X_1} \to \infGrpd$ defined by the span 
$X_1 \stackrel=\leftarrow X_1 \to \one$. 

First we define
\[
    \Phi_n := \zeta \circ S_n.
\]
By composition of spans, $\Phi_n$ is given by
$$
X_1 \stackrel{g}\longleftarrow \nondeg X_n \longrightarrow \one
$$
in accordance with \cite{GKT2}.  We also get
$$
\Phieven := \zeta \circ \Seven  = \sum_{n \text{ even}} \Phi_n ,
\qquad 
\Phiodd := \zeta \circ \Sodd = \sum_{n \text{ odd}} \Phi_n  .
$$

The following is now an immediate consequence of 
Theorem~\ref{thm:antipodeformula}.
\begin{corollary}[\cite{GKT2} Theorem 3.8]\label{cor:classical-mu}
For a monoidal complete decomposition space, the M\"obius
inversion principle holds, expressed by the explicit equivalences
\[
    \Phieven \conv \zeta \eq \epsilon + \Phiodd \conv \zeta 
	\quad \text{ and } \qquad
	\zeta \conv \Phieven \eq \epsilon + \zeta \conv \Phiodd.
\]
\end{corollary}

This proof is a considerable simplification compared to
the proof given in \cite{GKT2}, but note that it crucially depends on
the monoidal structure.  
The theorem of \cite{GKT2} is more general in that it works also
in the absence of a monoidal structure.

\paragraph{More general inversion.}
One advantage of the antipode over the M\"obius inversion formula
is that it gives
a uniform inversion principle, rather than just inverting the zeta
function.  At the $\Q$-vector space level, the result $\norm\mu =
\norm\zeta \circ \norm S$ is readily generalised as follows.  Let
$B_X$ denote the homotopy cardinality of the incidence bialgebra of a
monoidal M\"obius decomposition space $X$.

\begin{lemma}\label{Q-lemma}
    For any $\Q$-algebra $A$ with unit $\eta_A$, consider the 
  convolution algebra $(\operatorname{Lin}(B_X, A), \conv, \eta_A 
  \epsilon)$.  If $\phi: B_X \to A$ is multiplicative and sends
  all group-like elements to $\eta_A$, then $\phi$ is convolution
  invertible with inverse $\phi\circ S$.
\end{lemma}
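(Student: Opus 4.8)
The plan is to reduce the statement to the boxed inversion formula of Theorem~\ref{thm:antipodeformula} together with the observation that the hypotheses on $\phi$ force the two modified-identity functors $\Id$ and $\Id'$ to agree after postcomposition with $\phi$. First I would note that, at the $\Q$-vector-space level, precomposing the endofunctor identities of Theorem~\ref{thm:antipodeformula} with $\phi$ (or rather, applying $\phi$ to the relevant spans and using that $\phi$ is an algebra map so that it intertwines the convolution products) yields
\[
\norm{\phi} \circ \norm{\Id'} \conv \norm{\phi}\circ\norm{S} \ = \ \norm{\phi}\circ\norm{e} \ = \ \norm{\phi}\circ\norm{S}\conv\norm{\phi}\circ\norm{\Id'}.
\]
Here I am using that $\phi$ multiplicative means exactly that $\phi$ carries the bialgebra multiplication to that of $A$, hence $\phi\circ(F\conv G) = (\phi\circ F)\conv(\phi\circ G)$ for linear functors out of $B_X$; and that $\phi\circ e = \phi\circ\eta\circ\epsilon = \eta_A\,\epsilon$ since $\phi(\text{unit}) = \eta_A$. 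So the right-hand sides are the neutral element $\eta_A\epsilon$ of the convolution algebra $\operatorname{Lin}(B_X,A)$.

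The second and key step is to identify $\norm{\phi}\circ\norm{\Id'}$ with $\phi$ itself. Recall $\Id' = S_0 + S_1$, where $S_1$ is the inclusion of nondegenerate $1$-simplices and $S_0 = e$ picks out the degenerate ones but sends them to $\id_u$. On a basis element $f\in\pi_0 X_1$: if $f$ is nondegenerate then $\Id'(f)=f$ and $\phi(\Id'(f))=\phi(f)$; if $f$ is degenerate, i.e.\ $f = \id_x$ for some object $x$, then such $f$ is group-like (the identities are precisely the group-likes in an incidence bialgebra), so $\phi(f)=\eta_A$, while $\Id'(f)=\id_u$ is also group-like, so $\phi(\Id'(f)) = \eta_A$ as well. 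Hence $\phi\circ\Id' = \phi$ on the nose. Substituting this into the displayed identities gives $\phi\conv(\phi\circ S) = \eta_A\epsilon = (\phi\circ S)\conv\phi$, which is exactly the assertion that $\phi$ is convolution-invertible with inverse $\phi\circ S$.

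I expect the main obstacle to be making the first step fully rigorous at the $\Q$-linear level: one must check that "$\phi$ multiplicative" in the sense used here really does give $\phi\circ(F\conv G)=(\phi\circ F)\conv(\phi\circ G)$ for the $S_n$'s, which involves unwinding the definition of the convolution product (comultiply, tensor, multiply) and using that $\phi$ commutes with the bialgebra multiplication $\mu$ and its unit. A secondary subtlety is that the infinite sums $S_\even$, $S_\odd$ must be handled via the M\"obius finiteness hypothesis, so that $\norm{S}=\norm{S_\even}-\norm{S_\odd}$ is a well-defined element of $\operatorname{Lin}(B_X,A)$ and the manipulations above are legitimate finite computations on each basis element; this is exactly where the M\"obius condition on $X$ enters. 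Both points are routine given the machinery already set up, so the proof should be short.
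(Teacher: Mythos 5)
Your proposal is correct and follows essentially the same route as the paper's (much terser) proof: multiplicativity gives $\phi\circ(\Id'\conv S)=(\phi\circ\Id')\conv(\phi\circ S)$, the group-like hypothesis gives $\phi\circ\Id'=\phi$ (since the degenerate elements $s_0x$ and $\id_u$ are all group-like), and then the cardinality of the weak antipode formula $\Id'\conv S\simeq e$ finishes the argument. The extra care you take with the M\"obius finiteness condition and with spelling out $\phi\circ\Id'=\phi$ on basis elements is consistent with, and slightly more explicit than, what the paper records.
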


\begin{proof}
  Indeed, `multiplicative' ensures that
  $\phi\circ (S \conv \Id') = (\phi\circ S) \conv (\phi\circ \Id')$,
  and the condition on group-like elements ensures
that $\phi\circ \Id' = \phi$ (and that $\phi \circ \eta_B = \eta_A$).
\end{proof}

The {\em connected quotient} $H_X$ is defined as $H_X:= B_X/J_X$, where 
$$J_X = \langle s_0 x - s_0 u \mid x\in X_0 \rangle ,
$$ 
which is a Hopf ideal \cite{Sweedler}
since the elements $s_0 x$ are group-like. 
(Here $u$ denotes the monoidal unit.)
It is clear that $H_X$ is
connected, hence a Hopf algebra.  Now the conditions on $\phi$ in
Lemma~\ref{Q-lemma} amount precisely to saying that $\phi$ vanishes on
the Hopf ideal $J_X$, and hence factors through
the quotient Hopf algebra $H_X$:
\[
\begin{tikzcd}
  B_X \ar[rr, "\phi"] \ar[rd] && A    \\
  & H_X \ar[ru, "\overline{\phi} "', dotted] &
  \end{tikzcd}
\]

From this perspective, the weak antipode of $B_X$ does not invert
anything that could not have been inverted with classical technology, namely
by the true antipode in $H_X$.
The point of the weak antipode is that it is defined already at the 
objective level of decomposition spaces, without the need of quotienting.
We shall establish the following objective version of 
Lemma~\ref{Q-lemma}.
\begin{theorem}\label{thm:inversion}
  Let $X$ be a monoidal complete decomposition space, and 
  let $A$ be a monoidal $\infty$-groupoid---this makes 
  $\infGrpd_{/A}$ an algebra in $\LIN$.  Consider the convolution 
  algebra $(\LIN(\infGrpd_{/X_1},\infGrpd_{/A}), \conv, \eta_A 
  \epsilon)$.  If a linear functor $\phi: 
  \infGrpd_{/X_1} \to \infGrpd_{/A}$ is multiplicative and contracts 
  degenerate elements, then $\phi$ is convolution
  invertible with inverse $\phi\circ S$.
\end{theorem}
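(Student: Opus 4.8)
The plan is to reduce Theorem~\ref{thm:inversion} to Theorem~\ref{thm:antipodeformula} by transporting the four-fold equivalence there through the linear functor $\phi$. Since $\phi: \infGrpd_{/X_1} \to \infGrpd_{/A}$ is multiplicative, it intertwines the convolution product on endomorphisms with the convolution in $\LIN(\infGrpd_{/X_1},\infGrpd_{/A})$: concretely, for linear endofunctors $F, G$ of $\infGrpd_{/X_1}$ one has $\phi \circ (F \conv G) \eq (\phi \circ F) \conv (\phi \circ G)$, where the left $\conv$ is convolution of endofunctors (using the comultiplication on $X$ and the multiplication on $X$) and the right $\conv$ is convolution against $\Delta$ on the source and the multiplication on $A$ on the target. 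This identity is exactly what `multiplicative' means for $\phi$ at the objective level, so I would state it as a preliminary observation (possibly citing the relevant compatibility from \cite{GKT1} that $\phi$ being an algebra map in $\LIN$ is equivalent to such a span-level pullback compatibility). I expect this bookkeeping to be the main obstacle: one must be careful that $\phi$ interacts correctly with \emph{both} the comultiplication (on the $X_1$ side, common to both convolutions) and the two multiplications (on $X$ and on $A$), and that the unit $e = \eta\circ\epsilon$ on the endofunctor side is sent to $\eta_A \epsilon$ on the target side. The latter is precisely the second clause `$\phi\circ\eta_B = \eta_A$' hidden inside the hypothesis that $\phi$ contracts degenerate elements.

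Next I would record the consequence of the contraction hypothesis: since $\phi$ contracts degenerate elements, and $\Id'$ differs from $\Id$ only on the degenerate part $X_0 \subset X_1$ (sending it to $\id_u = s_0 u$, which $\phi$ sends to the unit $\eta_A$ of $\infGrpd_{/A}$), we get $\phi \circ \Id' \eq \phi \circ \Id = \phi$. This is the objective analogue of `$\phi\circ\Id'=\phi$' from the proof of Lemma~\ref{Q-lemma}; it should follow by comparing the defining spans, as the degenerate part of $\Id'$ factors through $\one$ via $w$ and $\phi\circ w$ lands on the unit object of $A$.

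Finally I would assemble the argument. Applying $\phi$ to the first equivalence of Theorem~\ref{thm:antipodeformula}, $\Id' \conv \Seven \eq e + \Id' \conv \Sodd$, and using multiplicativity of $\phi$ together with $\phi\circ\Id'\eq\phi$ and $\phi\circ e \eq \eta_A\epsilon$, yields
\[
    \phi \conv (\phi\circ \Seven) \ \eq \ \eta_A\epsilon \ + \ \phi \conv (\phi\circ \Sodd),
\]
and symmetrically from the second equivalence $(\phi\circ\Seven)\conv\phi \eq \eta_A\epsilon + (\phi\circ\Sodd)\conv\phi$. After taking homotopy cardinality (valid since $X$ is, for the cardinality statement, assumed M\"obius, so the sums $\Seven,\Sodd$ are finite), the formal difference $\phi\circ S := (\phi\circ\Seven) - (\phi\circ\Sodd)$ makes sense and the two displayed equations become $\phi \conv (\phi\circ S) = \eta_A\epsilon = (\phi\circ S)\conv\phi$, exhibiting $\phi\circ S$ as the two-sided convolution inverse of $\phi$. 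I would remark that at the purely objective level the statement is the pair of equivalences above, and that the phrasing `convolution invertible with inverse $\phi\circ S$' is to be read, as throughout the paper, via this even/odd splitting.
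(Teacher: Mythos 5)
Your proposal is correct and follows essentially the same route as the paper: it isolates the same three ingredients (multiplicativity of $\phi$ distributes over convolution, the contraction hypothesis gives $\phi\circ\Id'\eq\phi$ and sends the convolution unit to $\eta_A\epsilon$) and then applies them to Theorem~\ref{thm:antipodeformula}, exactly as in the paper's chain of equivalences. The only difference is one of emphasis: the paper proves the distributivity and the identity $\phi\circ\Id'\eq\phi$ as separate lemmas by explicit span/pullback computations, which you correctly flag as the bookkeeping to be supplied.
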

The main task is to define the notions involved.
Throughout, we let $X$ denote a monoidal complete decomposition 
space, and $A$ a monoidal $\infty$-groupoid.
A linear functor $\phi: 
  \infGrpd_{/X_1} \to \infGrpd_{/A}$ given by a span
  $$
  X_1 \stackrel{u}\leftarrow F \stackrel{v}{\to} A
  $$
  is called {\em multiplicative} if it is a span of monoidal functors with 
  $u$ CULF.  This means that we have commutative diagrams
  \begin{equation}\label{eq:multiplicative}
  \begin{tikzcd}
	X_1\times X_1 \ar[d, "\mu_1"']  & F\times F \dlpbk \ar[d, "\mu_F"] 
	\ar[l, "u\times u"'] \ar[r, "v\times v"] & A\times A \ar[d, 
	"\mu_A"] \\
	X_1 & F \ar[l, "u"] \ar[r, "v"'] & A
  \end{tikzcd}
  \qquad
  \begin{tikzcd}
	\one \ar[d, "\eta_1"']  & \one \dlpbk \ar[d, "\eta_F"] 
	\ar[l, "="'] \ar[r, "="] & \one \ar[d, 
	"\eta_A"] \\
	X_1 & F \ar[l, "u"] \ar[r, "v"'] & A  .
  \end{tikzcd}
  \end{equation}
  Commutativity of the diagrams expresses of course that the functors 
  $u$ and $v$ are monoidal. 
  CULFness amounts to the 
  pullback conditions indicated, which are required because we need to do 
  pull-push along these squares.

  A linear functor $\phi: 
  \infGrpd_{/X_1} \to \infGrpd_{/A}$ given by a span
  $$
  X_1 \stackrel{u}\leftarrow F \stackrel{v}{\to} A
  $$
  is said to {\em contract degenerate elements} if the following condition
  holds:
 \begin{equation}\label{eq:contracts}
   \begin{tikzcd}
	X_0\ar[d, "s_0"']  & X_0 \dlpbk \ar[d, "s_F"] 
	\ar[l, "="'] \ar[r, "p"] & \one \ar[d, 
	"\eta_A"] \\
	X_1 & F \ar[l, "u"] \ar[r, "v"'] & A  .
  \end{tikzcd}
  \end{equation}
  Two conditions are expressed by this: the first is that $u$ pulled 
  back along $s_0$ gives the identity map.  (The map $s_F$ is defined by 
  this pullback.)  The second condition says that $v\circ s_F$ 
  factors through the unit.  Altogether, the conditions express the
  idea of mapping all degenerate elements to the unit object of $A$.
  
  \begin{lemma}\label{contract=unital}
	If $\phi$ contracts degenerate elements 
	(Equation~\eqref{eq:contracts}), then it is unital 
	(Equation~\eqref{eq:multiplicative} RHS).
\end{lemma}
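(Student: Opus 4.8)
The plan is to prove this purely by pasting homotopy pullbacks. The starting point is that the monoidal unit $\eta \colon \one \to X$ is a \emph{simplicial} map, so that $\eta_1 \colon \one \to X_1$ factors as $\one \xrightarrow{\eta_0} X_0 \xrightarrow{s_0} X_1$, where $\eta_0$ picks out the monoidal unit object $u\in X_0$; thus $\eta_1$ names $\id_u = s_0 u$. In particular $\eta_1$ factors through a degenerate $1$-simplex, which is exactly what lets the hypothesis bite.

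First I would pull back $u\colon F\to X_1$ along this factorisation in two stages. Pulling back along $s_0\colon X_0\to X_1$ is exactly the left-hand square of \eqref{eq:contracts}: by hypothesis this pullback is (equivalent to) $X_0$ itself, with the canonical projection to $F$ being $s_F$ and the projection to $X_0$ being the identity. Pulling the resulting square back once more along $\eta_0\colon \one\to X_0$ yields the fibre $\one\times_{X_0} X_0\simeq\one$. By the pasting law for pullbacks, the composite rectangle exhibits $\one$ as the fibre of $u$ over $\eta_1$, with the induced map $\eta_F\colon\one\to F$ equivalent to $s_F\circ\eta_0$. This already produces the left-hand (pullback) square of the unitality diagram, RHS of \eqref{eq:multiplicative}.

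It then remains to check $v\circ\eta_F\simeq\eta_A$. Using $\eta_F\simeq s_F\circ\eta_0$ together with the right-hand square of \eqref{eq:contracts}, namely $v\circ s_F\simeq\eta_A\circ p$ with $p\colon X_0\to\one$ the terminal map, I would compute $v\circ\eta_F\simeq v\circ s_F\circ\eta_0\simeq\eta_A\circ p\circ\eta_0\simeq\eta_A$, since $p\circ\eta_0\colon\one\to\one$ is forced to be the identity. That is the commutativity of the right-hand square, so $\phi$ is unital.

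I do not expect any real obstacle: the argument is just the pasting law for homotopy pullbacks plus simpliciality of the monoidal unit. The one point to be careful about is the bookkeeping identity $\eta_1 = s_0\eta_0$, which is what guarantees that ``contracting degenerate elements'' actually controls the fibre of $u$ over the unit $\eta_1$ (and its image under $v$), and not merely the fibre over the copy of $X_0$ sitting inside $X_1$ via $s_0$.
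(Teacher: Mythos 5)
Your argument is correct and is essentially the paper's own proof: both factor $\eta_1$ as $s_0\circ\eta_0$ and paste the trivial pullback along $\eta_0$ on top of diagram \eqref{eq:contracts}, identifying $\eta_F\simeq s_F\circ\eta_0$ and checking $v\circ\eta_F\simeq\eta_A$ via $v\circ s_F\simeq\eta_A\circ p$. No issues.
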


\begin{proof}
  In the diagram
  \begin{center}
      \begin{tikzcd}
          \one \ar[d, "\eta_0"'] 
            & \one \dlpbk \ar[l, "="'] \ar[r, "="] \ar[d, "\eta_0"] 
            & \one \ar[d, "="]\\
          X_0 \ar[d, "s_0"'] 
            & X_0 \ar[d, "s_F"] \ar[r, "p"] \ar[l, "="'] \dlpbk
            & \one \ar[d, "\eta_A"] \\
          X_1 & F \ar[l,"u"] \ar[r,"v"'] & A ,
      \end{tikzcd}
  \end{center}
  the bottom squares are \eqref{eq:contracts}, and the outline 
  diagram is \eqref{eq:multiplicative} RHS, since
  the composite vertical arrows are $\eta_1$, $\eta_F$, and $\eta_A$.
\end{proof}

\begin{lemma}\label{phimult}
   If a linear functor $\phi: \infGrpd_{/X_1} \to \infGrpd_{/A}$
is multiplicative, then $\phi \circ -$ distributes over convolution.
Precisely, for any linear endofunctors $\alpha, \beta: \infGrpd_{/X_1} 
\to \infGrpd_{/X_1}$, we have
   \[\phi \circ (\alpha \conv \beta) \eq (\phi \circ \alpha) \conv (\phi \circ \beta).\]
\end{lemma}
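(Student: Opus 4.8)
The plan is to peel the convolution apart into the three linear functors out of which it is built and reduce the claim to a single base-change identity. By definition of the convolution product in $\LIN(\infGrpd_{/X_1},\infGrpd_{/X_1})$ we have $\alpha\conv\beta \eq (\mu_1)_!\circ(\alpha\otimes\beta)\circ\Delta$, where $(\mu_1)_!$ is the bialgebra product (postcomposition with the level-1 multiplication $\mu_1\colon X_1\times X_1\to X_1$) and $\Delta$ is the comultiplication of $X$; and since $A$ is a monoidal $\infty$-groupoid, $\infGrpd_{/A}$ is an algebra in $\LIN$ with product $(\mu_A)_!$, so in the convolution algebra $\LIN(\infGrpd_{/X_1},\infGrpd_{/A})$ we have $(\phi\circ\alpha)\conv(\phi\circ\beta) \eq (\mu_A)_!\circ\bigl((\phi\circ\alpha)\otimes(\phi\circ\beta)\bigr)\circ\Delta$. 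Both composites end in the \emph{same} comultiplication $\Delta$ of $X$, and by the interchange law for the tensor product on $\LIN$ (equivalently: a pullback over a product of $\infty$-groupoids splits canonically as the product of the two factorwise pullbacks) one has $(\phi\circ\alpha)\otimes(\phi\circ\beta) \eq (\phi\otimes\phi)\circ(\alpha\otimes\beta)$. Hence it suffices to establish the equivalence
\[
\phi\circ(\mu_1)_! \ \eq \ (\mu_A)_!\circ(\phi\otimes\phi)
\]
of linear functors $\infGrpd_{/X_1\times X_1}\to\infGrpd_{/A}$; precomposing it with $(\alpha\otimes\beta)\circ\Delta$ then yields the lemma.

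To prove that identity I would compute both sides as composites of spans. Writing $\phi$ as $X_1\xleftarrow{u}F\xrightarrow{v}A$, the functor $\phi\otimes\phi$ is the span $X_1\times X_1\xleftarrow{u\times u}F\times F\xrightarrow{v\times v}A\times A$, and since $(\mu_A)_!$ is postcomposition with $\mu_A$ (its span has identity left leg), composing leaves the apex $F\times F$ untouched: $(\mu_A)_!\circ(\phi\otimes\phi)$ is the span $X_1\times X_1\xleftarrow{u\times u}F\times F\xrightarrow{\mu_A\circ(v\times v)}A$. On the other side, $(\mu_1)_!$ has span $X_1\times X_1\xleftarrow{=}X_1\times X_1\xrightarrow{\mu_1}X_1$, so composing with $\phi$ amounts to forming the pullback of $\mu_1$ along $u$; by the left-hand square of \eqref{eq:multiplicative}, which is a pullback precisely because $u$ is CULF, this pullback is $F\times F$ with the two legs $u\times u$ and $\mu_F$. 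Postcomposing the right leg with $v$ and invoking commutativity of the right-hand square of \eqref{eq:multiplicative}, i.e.\ $v\circ\mu_F\eq\mu_A\circ(v\times v)$, identifies $\phi\circ(\mu_1)_!$ with the very same span $X_1\times X_1\xleftarrow{u\times u}F\times F\xrightarrow{\mu_A\circ(v\times v)}A$, which finishes the proof.

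The one step that requires genuine care is the opening reduction: one must keep straight which coalgebra and which algebra each of the two convolution products lives over, and one must justify $(\phi\circ\alpha)\otimes(\phi\circ\beta)\eq(\phi\otimes\phi)\circ(\alpha\otimes\beta)$ from the fact that $\otimes$ is a symmetric monoidal structure on $\LIN$. Once that bookkeeping is in place, the only geometric input is the single pullback square supplied by the CULFness of $u$ in the definition of multiplicativity, and the rest is routine composition of spans; I do not expect any homotopy-coherence difficulty beyond what is already packaged into the monoidal structure on $\LIN$ and the definition of the convolution product.
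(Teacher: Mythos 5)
Your proof is correct and rests on exactly the same two geometric inputs as the paper's: the CULF pullback square for $\mu_1$ from \eqref{eq:multiplicative}, and the identification $(M\times N)\times_{X_1\times X_1}(F\times F)\simeq(M\times_{X_1}F)\times(N\times_{X_1}F)$ (your ``interchange law''). The paper verifies the equivalence by writing out both composites as single iterated-pullback diagrams and matching apexes, whereas you modularise the same computation by factoring convolution as $\mu_!\circ(-\otimes-)\circ\Delta$ and reducing to the statement that $\phi$ is an algebra map in $\LIN$; this is a repackaging rather than a different argument, and it has the minor virtue of making explicit where CULFness is used, which the paper leaves implicit in a pullback marker.
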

\noindent Note that $\conv$ on the left refers to convolution of 
endofunctors, while $\conv$ on the right refer to convolution in 
$\LIN(\infGrpd_{/X_1},\infGrpd_{/A})$.

\begin{proof}
The left-hand side $\phi \circ (\alpha \conv \beta)$ is
computed by the pullbacks 
  \begin{center}
   \begin{tikzcd}[sep={1.6cm,between origins}]
     & & & P \ar[ddll, ""'] \ar[dr,""] \ddpbk & & & & \\
     & & & &   (M {\times} N) {\underset{X_1 \times X_1}\times} (F 
	 {\times} F) \ar[dl, "\operatorname{pr}_1"'] \ar[dr, ""] \ddpbk & & & \\
     & X_2 \ar[dl] \ar[dr] & &M \times N \ar[dl, "a \times b"'] \ar[dr] & & F \times F \ar[dl] \ar[dr, ""'] \ddpbk & &  \\
     X_1 & & X_1 \times X_1 & & X_1 \times X_1 \ar[dr] & & F \ar[dl, "u"'] \ar[dr, "v"] & \\
     & & & & & X_1 & & A.
 \end{tikzcd}   
  \end{center}
  The right-hand side $(\phi \circ \alpha) \conv (\phi \circ
\beta)$ is computed by the pullback
 \begin{center}
\begin{tikzcd}[sep={1.6cm,between origins}]
  && P  \ar[ld] \ar[rd] \ddpbk &&& \\
  & X_2 \ar[ld, ""'] \ar[rd, ""]  &&
{(M{\underset{X_1}\times} F)} {\times}  (N{\underset{X_1}\times} F)
\ar[ld,near start, "f"] \ar[rd]  & & \\
X_1 && X_1 \times X_1 & & A \times A \ar[rd] & \\
&&&& & A.
\end{tikzcd}
 \end{center}  
Here $f$ is the map $(a\circ \operatorname{pr}_1) \times
(b\circ \operatorname{pr}_1)$. 
These two composed spans agree since clearly
\[(M \times N) \underset{X_1 \times X_1}\times (F \times F)
\simeq
(M\underset{X_1}\times F) {\times}  (N\underset{X_1}\times F)
\]
 (and $f \simeq (a\times b) \circ \operatorname{pr}_1$).
\end{proof}

\begin{lemma}\label{phiidprime}
    If a linear functor $\phi: \infGrpd_{/X_1} \to \infGrpd_{/A}$
    contracts degenerate elements, then we have
  \[
    \phi \circ \Id' \eq \phi.
  \]
\end{lemma}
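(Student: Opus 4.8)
\textbf{Proof plan for Lemma~\ref{phiidprime}.}

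The plan is to unfold the definition $\Id' \eq S_0 + S_1$ and compute each summand of $\phi \circ \Id'$ as a composite of spans, then recognize the result as $\phi$. Since $\Id'$ is given by the span $X_1 \xla{=} X_0 + \ora{X}_1 \xra{w|i} X_1$, and $\phi$ by $X_1 \xla{u} F \xra{v} A$, the composite $\phi \circ \Id'$ is computed by the pullback of $w|i$ along $u$. Because $X_1 \simeq X_0 + \ora{X}_1$ in a complete decomposition space, this pullback splits as a sum of two pieces: one over the component $\ora{X}_1$ (where $w|i$ is the inclusion $i$), and one over the component $X_0$ (where $w|i$ is the constant map $w$ with value $\id_u$).

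First I would treat the $\ora{X}_1$-piece. Pulling $u$ back along the inclusion $\ora{X}_1 \subset X_1$ just restricts $F$ to the preimage of the nondegenerate part; combined with the $X_0$-piece this will reassemble into all of $F$, \emph{provided} the $X_0$-piece contributes exactly the portion of $F$ lying over degenerate $1$-simplices, mapped to $A$ in the same way as $\phi$ does. This is precisely where the hypothesis enters: the left square of Equation~\eqref{eq:contracts} says that $u$ pulled back along $s_0$ is the identity on $X_0$ (so the fibre of $F$ over the degenerate $1$-simplices is just $X_0$ itself, via $s_F$), and the right square says $v \circ s_F$ factors through $\eta_A$ — which is exactly the value that the $w$-leg of $\Id'$ forces (recall $w = \eta \circ p$ lands on $\id_u$, and $\phi$ being multiplicative/unital, by Lemma~\ref{contract=unital}, sends $\id_u$ to the unit of $A$). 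So the $X_0$-piece of $\phi \circ \Id'$ is the span $X_1 \xla{u \circ s_F \,=\, s_0} X_0 \xra{\eta_A \circ p} A$, which is exactly the restriction of $\phi$'s span to the degenerate component of $X_1 \simeq X_0 + \ora X_1$.

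Assembling the two pieces: $\phi \circ \Id'$ is given by the span whose middle object is $(\text{preimage under } u \text{ of } \ora X_1) + X_0$, which by completeness and Equation~\eqref{eq:contracts} is equivalent to $F$ itself, with legs $u$ and $v$. Hence $\phi \circ \Id' \eq \phi$. The one step requiring care — the main obstacle — is the bookkeeping that the decomposition of the middle object $F \simeq u^*(\ora X_1) + u^*(X_0)$ matches, leg for leg, the decomposition of $\Id'$ as $S_0 + S_1$; this is really just chasing the pullback squares of \eqref{eq:contracts} and using $X_1 \simeq X_0 + \ora X_1$, but it is the place where completeness of $X$ and both halves of the "contracts degenerate elements" condition are genuinely used. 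Everything else is formal manipulation of composites of spans.
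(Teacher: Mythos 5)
Your proposal is correct and is essentially the paper's own proof: the paper packages your ``restriction of $\phi$ to the degenerate component'' as the composite $\phi\circ\omega$ with $\omega$ the span $X_1 \xla{s_0} X_0 \xra{s_0} X_1$, writes $\Id = \omega + S_1$ and $\Id' = S_0 + S_1$, and reduces the claim to $\phi\circ S_0 \eq \phi\circ\omega$, which it verifies by exactly the pullback computations you describe using both squares of \eqref{eq:contracts}. The only cosmetic difference is organizational; the mathematical content is the same.
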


\begin{proof}
  Let $\omega$ denote the endofunctor defined by the span $X_1 
  \stackrel{s_0} \longleftarrow X_0 \stackrel{s_0}\longrightarrow 
  X_1$.
  Since $\Id' = S_0 + S_1$ and $\Id = \omega + S_1$, it is enough to
  establish
  $$
  \phi \circ S_0 \simeq \phi \circ \omega .
  $$
  The left-hand side $\phi \circ S_0$ is computed by the pullbacks
  \begin{center}
   \begin{tikzcd}[sep={1.4cm,between origins}]
     & & &     & X_0\ar[dl, "="'] \ar[dr,"p"] \ddpbk & & & & \\
     & & & X_0 \ar[dddlll, "s_0"'] \ar[dr] &   & \one \ar[dl, "="'] \ar[dr, "\eta_0"] \ar[dddrrr, bend left=35, "\eta_A"] \ddpbk & & & \\
     & & & & \one  \ar[dr, "\eta_0"'] & & X_0 \ar[dl, "="'] \ar[dr, "s_F"] \ddpbk & & \\
     & & & & & X_0 \ar[dr, "s_0"'] & & F \ar[dl, "u"'] \ar[dr, "v"] & \\
     X_1 & & & & & & X_1 & & A.
 \end{tikzcd}    
 \end{center}   
  
  The right-hand side $\phi \circ \omega$ is computed by the pullback
  \begin{center}
   \begin{tikzcd}[sep={1.4cm,between origins}]
      & & X_0 \ar[dl, "="'] \ar[dr, "s_F"] \ddpbk & & \\
      & X_0 \ar[dl, "s_0"'] \ar[dr, "s_0"] & & F \ar[dl, "u"'] \ar[dr, "v"] & \\
      X_1 & & X_1 & & A,
    \end{tikzcd}    
  \end{center}   
  and the composite $v \circ s_F$ is again $\eta_A \circ p$ by hypothesis.
\end{proof}

\begin{proof}[Proof of Theorem~\ref{thm:inversion}]
  We need to show that $\phi\circ S$ is convolution inverse to $\phi$.
  With the preparations made, this is now direct:
  \[
  (\phi\circ S) \conv \phi
  \;\stackrel{\ref{phiidprime}}\simeq\;
  (\phi\circ S) \conv (\phi\circ \Id')
  \;\stackrel{\ref{phimult}}\simeq\;
  \phi \circ (S \conv \Id') 
  \;\stackrel{\ref{thm:antipodeformula}}\simeq\;
  \phi\circ \eta_1 \circ \epsilon
  \;\stackrel{\ref{contract=unital}}\simeq\;
  \eta_A \circ \epsilon   .
  \]
\end{proof}

\begin{remark}
  The more general M\"obius inversion principle of Lemma~\ref{Q-lemma}
  and Theorem~\ref{thm:inversion} is of interest for two reasons.
  Firstly, in the connected case, the general M\"obius inversion
  principle, which we here derived from the antipode, but which can be
  formulated without reference to $S$, is actually {\em equivalent} to
  the existence of the antipode. Indeed, if one takes $A$ to
  be $X_1$ itself (so that at the cardinality level one uses $B_X$ as
  the algebra $A$), and takes $\phi$ to be the identity map, then the
  resulting inverse is the antipode.
  
  Secondly, the extra generality serves to highlight the tight analogy
  between M\"obius inversion and abstract Hopf-algebraic
  renormalisation in perturbative quantum field theory, as explained
  in \cite{Kock:1809.00941}.  The $\phi$ are then the (regularised)
  Feynman rules (which are inherently multiplicative, and can be
  arranged to send group-like elements to $1$).  In this generality,
  the passage from M\"obius inversion to renormalisation consists
  just in adding a Rota--Baxter operator to the formulae (see
  \cite{Kock:1809.00941} for details).  (The 
  result is then no longer an inverse but rather a counter-term.)
 \end{remark}


\end{document}